\newtheorem{theorem}{Theorem}
\theoremstyle{plain}
\newtheorem{definition}{Definition}
\newtheorem{lemma}{Lemma}
\numberwithin{equation}{section}
\begin{document}
\title[H\`{a}jek - R\'{e}nyi's inequality and the SLLN]{On H\`{a}jek - R\'{e}nyi type inequality and application}
\author{Cheikhna Hamallah Ndiaye}
\email{chndiaye@ufrsat.org}
\address{LERSTAD, Universit\'{e} de Saint-Louis.}
\address{LMA - Laboratoire de Math\'ematiques Appliqu\'ees \\
Universit\'e Cheikh Anta Diop BP 5005 Dakar-Fann S\'en\'egal}
\author{Gane Samb LO}
\email{gane-samb.lo@ugb.edu.sn}
\address{LSTA, UPMC, FRANCE and LERSTAD, Universit\'{e} de Saint-Louis, SENEGAL.}

\begin{abstract}
\large
In this paper, we establish a new H\`{a}jek - R\'{e}nyi's type inequality
and obtain strong law of large numbers (SLLN) for arbitrary random
variables. We base our methods on demimartingales and convex functions
techniques. We obtain wide extensions of avalaible results, particularly those of Amini et al. \cite{amini06} and Rao \cite{rao1} and related SLLN's.
\end{abstract}

\keywords{Kolmogorov's inequality; H\`{a}jek - R\'{e}nyi's inequality; strong law of
large numbers; Martingale; Submartingale; Demisubmartingale}
\subjclass{60E15,60F15,60G42,60G48}
\maketitle

\Large

\section{INTRODUCTION}

\label{sec1}

This paper is concerned with general strong laws of large numbers (SLLN)
based on H\`{a}jek - R\'{e}nyi's type inequalities and therein provides sharp
generalization of recent forms of such inequalities. To begin with, we
remind the seminal result of H\`{a}jek - R\'{e}nyi \cite{hajek}. Let $%
X_{1},X_{2},...$ \ denote a sequence of random variables defined on a
fixed probability space $(\Omega ,\mathcal{F},P)\ ,S_{o}=0,S_{n}=\sum_{i=1}^{n}X_{i}$
for $n\geq 1.$

\begin{lemma} (Inequality of H\`{a}jek - R\'{e}nyi). If $\{X_{n},n\geq 1\}$ is a
sequence of independent random variables with mean zero, and $(b_{n})_{n\geq
1}$ is a nondecreasing sequence of positive real numbers, then for any $%
\varepsilon >0$ and any positive integer $m\leq n$,
\begin{equation*}
\mathbb{P}({\max_{m\leq k\leq n}}\frac{{\sum_{j=1}^{k}}X_{j}}{b_{k}}%
>\varepsilon )\leq \varepsilon ^{-2}{\sum_{j=m+1}^{n}}\mathbb{E}(\frac{%
X_{j}^{2}}{b_{j}^{2}})+\frac{1}{b_{m}^{2}}{\sum_{j=1}^{m}}\mathbb{E(}%
X_{j}^{2}).
\end{equation*}
\end{lemma}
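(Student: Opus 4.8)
The plan is to derive the inequality from a weighted form of Kolmogorov's maximal inequality with monotone weights, which in turn is proved by the classical first-passage (stopping-event) decomposition together with an Abel summation. First I would merge the two index ranges into one: put $c_k=b_k^{-2}$ for $m\le k\le n$ and $c_k=b_m^{-2}$ for $1\le k\le m$, so that $(c_k)_{1\le k\le n}$ is a nonincreasing sequence of positive reals. Since $c_k^{1/2}|S_k|=|S_k|/b_k\ge S_k/b_k$ for $m\le k\le n$, we have $\{\max_{m\le k\le n} S_k/b_k>\varepsilon\}\subseteq\{\max_{1\le k\le n} c_k^{1/2}|S_k|\ge\varepsilon\}$, and it is enough to prove
\[
\varepsilon^{2}\,\mathbb{P}\Big(\max_{1\le k\le n}c_k^{1/2}|S_k|\ge\varepsilon\Big)\le\sum_{k=1}^{n}c_k\,\mathbb{E}X_k^{2};
\]
inserting the chosen $c_k$ and using $\mathbb{E}S_j^{2}=\sum_{i\le j}\mathbb{E}X_i^{2}$ (independence and mean zero) then yields the right-hand side of the Lemma.

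For the weighted inequality, for $1\le k\le n$ let $C_k$ be the event that $c_j^{1/2}|S_j|<\varepsilon$ for every $j<k$ but $c_k^{1/2}|S_k|\ge\varepsilon$; the $C_k$ are pairwise disjoint, their union is $\{\max_k c_k^{1/2}|S_k|\ge\varepsilon\}$, and $\mathbf{1}_{C_k}$ is $\sigma(X_1,\dots,X_k)$-measurable. On $C_k$ one has $\varepsilon^{2}\le c_k S_k^{2}$, so $\varepsilon^{2}\mathbb{P}(C_k)\le c_k\,\mathbb{E}[S_k^{2}\mathbf{1}_{C_k}]$. The crucial observation is the orthogonality bound: for $j\ge k$, expanding $S_j=S_k+(S_j-S_k)$ and noting that $S_j-S_k$ is independent of $\sigma(X_1,\dots,X_k)$ and centered, the cross term drops out and $\mathbb{E}[S_j^{2}\mathbf{1}_{C_k}]=\mathbb{E}[S_k^{2}\mathbf{1}_{C_k}]+\mathbb{E}[(S_j-S_k)^{2}\mathbf{1}_{C_k}]\ge\mathbb{E}[S_k^{2}\mathbf{1}_{C_k}]$. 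This is precisely where independence and the zero-mean hypothesis enter, and it is the step one would later replace, for dependent variables, by a demimartingale/convex-function argument.

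Now set $c_{n+1}=0$ and $d_j=c_j-c_{j+1}\ge0$, so $c_k=\sum_{j=k}^{n}d_j$. Combining the two estimates above,
\[
\varepsilon^{2}\mathbb{P}(C_k)\le c_k\,\mathbb{E}[S_k^{2}\mathbf{1}_{C_k}]=\sum_{j=k}^{n}d_j\,\mathbb{E}[S_k^{2}\mathbf{1}_{C_k}]\le\sum_{j=k}^{n}d_j\,\mathbb{E}[S_j^{2}\mathbf{1}_{C_k}].
\]
Summing over $k$, interchanging the summations over the triangular region $1\le k\le j\le n$, and using $\mathbf{1}_{\bigcup_{k\le j}C_k}\le 1$ gives
\[
\varepsilon^{2}\,\mathbb{P}\Big(\max_{1\le k\le n}c_k^{1/2}|S_k|\ge\varepsilon\Big)\le\sum_{j=1}^{n}d_j\,\mathbb{E}[S_j^{2}]=\sum_{j=1}^{n}d_j\sum_{i=1}^{j}\mathbb{E}X_i^{2}=\sum_{i=1}^{n}c_i\,\mathbb{E}X_i^{2},
\]
which is the desired weighted inequality and hence, by the reduction, the Lemma. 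I expect the only real subtlety to be the Abel-summation bookkeeping — keeping the nested sums and unions straight so that no constant worse than $\varepsilon^{-2}$ is lost; the orthogonality step itself is just the classical Kolmogorov argument applied at the stopping event $C_k$.
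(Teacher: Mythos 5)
Your proof is correct, and the bookkeeping you worried about does close: the merged weight sequence $c_k$ (equal to $b_m^{-2}$ for $k\le m$, to $b_k^{-2}$ for $m\le k\le n$) is nonincreasing, the first-passage events $C_k$ are disjoint with union the maximal event, the orthogonality step $\mathbb{E}[S_j^{2}\mathbf{1}_{C_k}]\ge\mathbb{E}[S_k^{2}\mathbf{1}_{C_k}]$ for $j\ge k$ is exactly where independence and centering enter (and is legitimate since one may assume all $\mathbb{E}X_j^{2}<\infty$, the bound being trivial otherwise), and the Abel summation $c_k=\sum_{j\ge k}d_j$ followed by the interchange of sums and $\mathbf{1}_{\cup_{k\le j}C_k}\le 1$ gives $\sum_i c_i\mathbb{E}X_i^{2}$, which is precisely the stated right-hand side; you in fact prove the stronger two-sided version with $|S_k|$. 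Be aware, though, that the paper contains no proof of this lemma at all: it is quoted from H\`ajek and R\'enyi (1955) as the classical starting point, and the paper's own method runs along a genuinely different track, namely bounding $S_n\le u_n+v_n$ with $u_n=\sum_{i\le n}X_i^{+}$, $v_n=\sum_{i\le n}X_i^{-}$ and invoking Rao's Whittle-type inequality for demisubmartingales together with $\phi(x+y)\le K[\phi(x)+\phi(y)]$. That route, specialized to $\phi(x)=x^{2}$, only recovers an Amini--Bozorgnia-type bound with constant $8/\varepsilon^{2}$ plus cross terms $\sigma_k\sum_{i<k}\sigma_i$, not the sharp constants of the classical inequality; its advantage is that it needs no independence and so covers arbitrary (even heavy-tailed, via $\phi(x)=|x|^{\nu}$, $\nu\ge1$) sequences. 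Your argument buys the optimal constants by exploiting orthogonality of increments at the stopping events, but that is exactly the step that does not survive once independence is dropped, which is why the paper takes the demimartingale/convex-function detour.
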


\bigskip

\noindent This inequality has been studied and generalized by many authors. The latest
literature is given by Liu et al. \cite{liu} for negative association random
variables, Christofides \cite{christophe00} for demimartingales,
Christofides and Vaggelaton \cite{christophe04} for positive and negative
association, Rao\cite{rao3} and Sung \cite{sung} and Hu \textit{et al.} \cite{hu09} for
associated random variables and Wang \textit{et al.} \cite{wang10} for demimaringales
which extended \cite{christophe00}. Recent developments on the topic can be found
in Rao \cite{rao}. It is also worth mentionning that highly sophisticated
forms of such inequalities and related SLLN's are available for random
fields (see Nosz\'{a}ly et al. \cite{noszaly}, Farzkas et al. \cite
{fazekas99}, Hamallah et al. \cite{hamallah}, etc.).\newline

\noindent Our departure point\noindent\ is the H\`{a}jek - R\'{e}nyi type
inequality of Amini et al. \cite{amini06}, established for arbitrary random
variables when the second moments of the $X_{n}$'s are finite. Such results do not apply to stable random variables of parameter $1 \leq \alpha <2$ for example.\newline

\bigskip

\noindent This motivated us to extend the preceding H\`{a}jek - R\'{e}nyi type of
inequalities to a more general one. We would get more general
SLLN's and would be able to handle cases the former could not do, as the one mentioned above.\\

\noindent To achieve our objectives we call on demimartingales techniques and convex functions properties and provide a generalization of the inequality of Rao \cite{rao1} as a basis of our main inequality result in Theorem \ref{theo} below. Next, we derive from it our general SLLN in Theorem \ref{theo2}, which finally resulted in a considerable generalizations of the SLLN's of \cite{amini06}.\newline

\noindent The paper is organized as follows. In Section \ref{sec2}, we present some
definitions and lemmas which we need to prove our main result. In the section \ref{sec3},
we establish a H\`{a}jek - R\'{e}nyi's inequality for arbitrary random
variables. As a consequence, we obtain a strong law of large numbers for
arbitrary random variables. We conclude the paper by applications and comparison remarks with former results.
\newline

\noindent Whenever needed, any sequence of random variables used in the paper may be required to be $\mathcal{F}_{n}$%
-adapted where $(\mathcal{F}_{n},n\geq 0)$ is some filtration on $(\Omega ,\mathcal{F},P)$. We also
adopt the classical notation $X_{n}^{+}=max(0,X_{n})$ and $\ X_{n}^{-}=max(0,-X_{n}),$
$n=1,2,...$

\section{Definitions and some reminders} \label{sec2}

We begin to remind the notion of demimartingale introduced by Newmann and Wright \cite{wright}.

\begin{definition}
A $\mathcal{F}_{n}$-adapted sequence of random variables $\{T_{n},n\geq 1\}$ \ in $L^{1}(\Omega ,\mathcal{F},\mathbb{P})$ is called a demimartingale  if, for every $j\geq 1$, for every componentwise nondecreasing
function $g(x)$ of $x \in \mathbb{R}^{j}$,
\begin{equation}
\mathbb{E} \left( (T_{j+1}-T_{j})\ g(T_{1},...,T_{j}) \right) \geq 0. \label{dm1}
\end{equation}

\noindent If further the function $g$ in (\ref{dm1}) are required to be nonnegative (resp. non positive), then the sequence is a called a demisubmartingale (resp., a demisupermartingale).
\end{definition}

\noindent Let us make some immediate remarks. First a $\mathcal{F}_{n}$-martingale $T_{n}$ is a demimartingale. This can be seen by noting that
$$
\mathbb{E}\biggl((T_{j+1}-T_{j})g(T_{1},...,T_{j})\biggr)
=\mathbb{E}\biggl[\mathbb{E}\biggl((T_{j+1}-T_{j})g(T_{1},...,T_{j})\biggr)|
\mathcal{F}_{j}\biggr]
$$
$$
=\mathbb{E}\biggl[g(T_{1},...,T_{j})\mathbb{E}\biggl((T_{j+1}-T_{j})|%
\mathcal{F}_{j}\biggr)\biggr] =0
$$

\noindent by using the martingale property of the process $\{T_{n}, \mathcal{F}_{n},n\geq 1\}$. Similarly it can be seen that every $\mathcal{F}_{n}$-submartingale is a demisubmartingale. However a demisubmartingale need not be a $\mathcal{F}_{n}$-submartingale as showed by \cite{hadj}, even the filtration is the natural one.\\

\noindent We now recall some connexions between convex functions and demimartingales.
\begin{lemma} (see Lemma 2.1 and Corollary 2.1 in \cite{christophe00})\\
\noindent i) Let  $T_{1},T_{2},...,$ be a demisubmartingale (or a demimartingale) and
$\phi$ a nondecreasing convex function such that $\phi(T_{i})\in L_{1},\
i\geq 1.$ Then \ $\phi (T_{1}),\phi (T_{2}),...,$ is a demisubmartingale.\\

\noindent ii) If $\{T_{n},n\geq 1\}$ \ is a demimartingale, then $\{T_{n}^{+},n\geq 1\}$
 is a demisubmartingale et $\{T_{n}^{-},n\geq 1\}$  is a demisubmartingale.
\end{lemma}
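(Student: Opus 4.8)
The plan is to obtain both parts from the supporting-line (subgradient) inequality for the convex function $\phi$, combined with the defining inequality (\ref{dm1}); the delicate point is to verify that the auxiliary functions produced along the way are again nonnegative and componentwise nondecreasing, so that they are admissible in (\ref{dm1}).

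First I would note that a demimartingale is in particular a demisubmartingale, since every nonnegative componentwise nondecreasing $g$ is componentwise nondecreasing; hence in (i) it suffices to treat a demisubmartingale $\{T_{n}\}$. Fix $j\ge 1$ and a componentwise nondecreasing $h\colon\mathbb{R}^{j}\to[0,\infty)$; the goal is
$$
\mathbb{E}\Bigl(\bigl(\phi(T_{j+1})-\phi(T_{j})\bigr)\,h(T_{1},\dots,T_{j})\Bigr)\ge 0 .
$$
Let $\varphi:=\phi'_{+}$ be the right derivative of $\phi$, which is nondecreasing (by convexity) and nonnegative (because $\phi$ is nondecreasing), and recall that $\phi(y)-\phi(x)\ge\varphi(x)(y-x)$ for all $x,y\in\mathbb{R}$. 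Inserting $x=T_{j}$, $y=T_{j+1}$ and multiplying by $h(T_{1},\dots,T_{j})\ge 0$ gives the pointwise bound
$$
\bigl(\phi(T_{j+1})-\phi(T_{j})\bigr)\,h(T_{1},\dots,T_{j})\ \ge\ (T_{j+1}-T_{j})\,g(T_{1},\dots,T_{j}),
$$
with $g(x_{1},\dots,x_{j}):=\varphi(x_{j})\,h(x_{1},\dots,x_{j})$. Here $\varphi(x_{j})$, viewed as a function on $\mathbb{R}^{j}$, is nonnegative and componentwise nondecreasing, and so is $h$; since a product of two such functions is again nonnegative and componentwise nondecreasing, $g$ is admissible in (\ref{dm1}), whence $\mathbb{E}\bigl((T_{j+1}-T_{j})g(T_{1},\dots,T_{j})\bigr)\ge 0$. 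Taking expectations in the pointwise bound then yields the claim, and so $\{\phi(T_{n})\}$ is a demisubmartingale.

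For (ii), I would first apply (i) with the nondecreasing convex function $\phi(x)=x^{+}$: since $\{T_{n}\}$ is a demimartingale (hence a demisubmartingale) and $0\le T_{n}^{+}\le|T_{n}|\in L^{1}$, this gives that $\{T_{n}^{+}\}$ is a demisubmartingale. For $\{T_{n}^{-}\}$ I would use that $\{-T_{n}\}$ is also a demimartingale: if $g$ is componentwise nondecreasing on $\mathbb{R}^{j}$, then $\psi(x_{1},\dots,x_{j}):=-g(-x_{1},\dots,-x_{j})$ is too, and
$$
\mathbb{E}\Bigl(\bigl(-T_{j+1}-(-T_{j})\bigr)\,g(-T_{1},\dots,-T_{j})\Bigr)=\mathbb{E}\Bigl((T_{j+1}-T_{j})\,\psi(T_{1},\dots,T_{j})\Bigr)\ge 0 ;
$$
since $T_{n}^{-}=(-T_{n})^{+}$, applying the previous case to $\{-T_{n}\}$ shows $\{T_{n}^{-}\}$ is a demisubmartingale.

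The step I expect to cause the most trouble is the passage from the pointwise subgradient bound to the inequality of expectations: one has to justify multiplying by the possibly unbounded test function $h$ and integrating, and to make sure the expectation in (\ref{dm1}) applied to $g$ is well defined. This is not deep — it is handled by first truncating $\phi$ so that $\varphi$ is bounded and $h$ so that it is bounded, then using $\phi(T_{i})\in L^{1}$ together with monotone convergence to remove the truncations, exactly as in \cite{christophe00} — but it is the only place where anything beyond the elementary ``product of monotone functions is monotone'' observation is needed.
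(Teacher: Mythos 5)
Your proof is correct: the paper itself gives no proof of this lemma (it is quoted directly from Christofides \cite{christophe00}), and your argument --- the right-derivative bound $\phi(y)-\phi(x)\ge \phi'_{+}(x)(y-x)$ combined with the fact that $\varphi(x_j)h(x_1,\dots,x_j)$ is again a nonnegative componentwise nondecreasing test function, plus the reflection $T_n\mapsto -T_n$ and $T_n^-=(-T_n)^+$ for part (ii) --- is essentially the standard proof from that reference. Note also that your goal, stated for test functions $h(T_1,\dots,T_j)$ of the original variables, is in fact slightly stronger than what the demisubmartingale property of $\{\phi(T_n)\}$ requires, and it does cover it since $g(\phi(x_1),\dots,\phi(x_j))$ is nonnegative and componentwise nondecreasing whenever $g$ is.
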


\noindent Rao introduced the following inequality that we are going to generalize.
\begin{lemma}(see Rao \cite{rao1}). Let  $\{T_{n},n\geq 1\}$ be a demisubmartingale and
$\phi(.)$ be a nonnegative nondecreasing convex function such that $\phi(T_{o})=0$. Let  $\chi(b)$  be a positive nondecreasing function of  $b>0$ and let $0=b_{o}<b_{1}\leq ...\leq b_{n}$. Then
$$
\mathbb{P}\left( \phi(T_{k}) \leq \chi(b_{k}), \text{ } 1 \leq k \leq n \right)
\geq 1-\sum_{k=1}^{n} \frac{\mathbb{E}[\phi (T_{k})]-\mathbb{E}[\phi (T_{k-1})]}{\chi (b_{k})}.
$$
\end{lemma}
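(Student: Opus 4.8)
The plan is to follow the classical Hàjek–Rényi argument, adapted to the demimartingale setting via the maximal inequality for nonnegative demisubmartingales. First I would observe that by Lemma 2 (part i), since $\{T_n, n\geq 1\}$ is a demisubmartingale and $\phi$ is a nondecreasing convex function with $\phi(T_i)\in L_1$, the sequence $\{\phi(T_n), n\geq 1\}$ is itself a nonnegative demisubmartingale (nonnegativity coming from $\phi$ being a nonnegative function, together with $\phi(T_0)=0$). This reduces the statement to a maximal-type inequality about the nonnegative demisubmartingale $Y_k := \phi(T_k)$ and the nondecreasing positive weights $\chi(b_k)$.

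Next I would pass to the complementary event: $\mathbb{P}(\phi(T_k)\leq \chi(b_k),\ 1\leq k\leq n) = 1 - \mathbb{P}(\exists\, k\leq n:\ \phi(T_k) > \chi(b_k))$. The key step is to estimate $\mathbb{P}(\max_{1\leq k\leq n} Y_k/\chi(b_k) > 1)$. I would use the standard device of summation by parts (Abel summation) on the weights. Writing $c_k = 1/\chi(b_k)$, which is a nonincreasing sequence since $\chi$ is nondecreasing and $b_k$ is nondecreasing, one expresses $Y_k/\chi(b_k) = c_k Y_k$ and decomposes $c_k Y_k$ using $c_k = \sum_{j=k}^{n}(c_j - c_{j+1}) + c_{n+1}$ (with an appropriate convention, e.g. $c_{n+1}=0$ or absorbing the last term), so that the event $\{c_k Y_k > 1\}$ is controlled by a weighted sum of the partial maxima of $Y$. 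Then I would apply the Doob-type maximal inequality for nonnegative demisubmartingales — namely $\lambda\,\mathbb{P}(\max_{1\leq k\leq m} Y_k \geq \lambda) \leq \mathbb{E}[Y_m]$, which is available in Christofides \cite{christophe00} and underlies Lemma 2 — to each block, and recombine.

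More concretely, the cleanest route is: apply the maximal inequality on successive "layers." For each $k$, the contribution of the event where $\chi(b_k)$ is the active threshold is bounded, after Abel summation, by $\sum_{k=1}^{n} (c_k - c_{k+1})\,\mathbb{E}[\text{(max of } Y \text{ up to some index)}]$ plus telescoping boundary terms; using $\mathbb{E}[Y_j] \leq \mathbb{E}[Y_k]$ for $j\leq k$ from the demisubmartingale property (expectations are nondecreasing, since $\mathbb{E}[(T_{j+1}-T_j)\cdot 1]\geq 0$ applied to $Y$), the sum collapses to $\sum_{k=1}^{n} (\mathbb{E}[Y_k] - \mathbb{E}[Y_{k-1}])/\chi(b_k)$. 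Substituting $Y_k = \phi(T_k)$ and $Y_0 = \phi(T_0) = 0$ gives exactly $\sum_{k=1}^{n} (\mathbb{E}[\phi(T_k)] - \mathbb{E}[\phi(T_{k-1})])/\chi(b_k)$, and taking complements yields the claimed lower bound on $\mathbb{P}(\phi(T_k)\leq \chi(b_k),\ 1\leq k\leq n)$.

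The main obstacle I anticipate is carrying out the Abel-summation bookkeeping so that the maximal inequality is applied to genuine demisubmartingale segments $\{Y_1,\dots,Y_m\}$ (the demisubmartingale structure is only preserved along initial segments, not arbitrary blocks), and making sure the telescoping of the weighted expectations produces precisely the differences $\mathbb{E}[\phi(T_k)] - \mathbb{E}[\phi(T_{k-1})]$ with the correct denominators $\chi(b_k)$ rather than, say, $\chi(b_{k-1})$. A careful choice of which partial maximum to bound at each step — bounding $\mathbb{P}(\bigcup_{j\leq k}\{Y_j > \chi(b_j)\})$ incrementally and using that $\chi(b_k)$ is the largest threshold among the first $k$ — handles this. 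The monotonicity $\chi(b_1)\leq\cdots\leq\chi(b_n)$ together with $b_0=0<b_1$ and $\phi(T_0)=0$ are exactly the hypotheses that make all boundary terms vanish or line up correctly.
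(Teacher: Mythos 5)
Note first that the paper does not actually prove this lemma: it is quoted from Rao \cite{rao1}, whose argument is a Whittle-type induction on $n$, so your attempt has to stand on its own. Your reduction is fine — by Lemma 2(i), $Y_k=\phi(T_k)$ is a nonnegative demisubmartingale with $Y_0=\phi(T_0)=0$, and taking $g\equiv 1$ in the defining property shows $E[Y_k]$ is nondecreasing. The gap is in your ``more concretely'' step. After the Abel/layer decomposition with $c_k=1/\chi(b_k)$, what you actually arrive at is a bound of the form $\sum_{k}(c_k-c_{k+1})\,E[\max_{j\le k}Y_j]$, which you then ``collapse'' using $E[Y_j]\le E[Y_k]$. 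That replaces the expectation of a partial maximum by the expectation of its last term, i.e.\ it implicitly uses $E[\max_{j\le k}Y_j]\le E[Y_k]$, which is false in general. The unweighted Doob-type inequality $\lambda\,P(\max_{j\le k}Y_j\ge\lambda)\le E[Y_k]$ controls tail probabilities of the maximum, not its expectation, and recombining it over layers or blocks only produces bounds with degraded constants (e.g.\ $P(\max_k c_kY_k\ge 1)\le c_1E[Y_n]$), never the stated telescoping bound with constant $1$.

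What your route needs is the weighted (Chow-type) maximal inequality for demisubmartingales, namely Theorem 2.1 of Christofides \cite{christophe00}: for a nonnegative demisubmartingale $Y$ with $Y_0=0$ and nonincreasing $c_k>0$, one has $P(\max_{1\le k\le n}c_kY_k\ge 1)\le\sum_{k=1}^{n}c_k\,(E[Y_k]-E[Y_{k-1}])$; applied with $c_k=1/\chi(b_k)$ this is exactly the lemma. The ingredient of its proof that is missing from your sketch is the first-passage decomposition: with $B_k=\{ \max_{j\le k} c_jY_j\ge 1\}$ and $A_k=B_k\setminus B_{k-1}$ one gets $P(B_n)\le\sum_k c_k\,(E[Y_k1_{B_k}]-E[Y_k1_{B_{k-1}}])$, and then the demisubmartingale property is applied with $g=1_{B_{k-1}}$ — a nonnegative, componentwise nondecreasing function of $(T_1,\dots,T_{k-1})$ since $\phi$ is nondecreasing — to obtain $E[Y_k1_{B_{k-1}}]\ge E[Y_{k-1}1_{B_{k-1}}]$; only after this does Abel summation (using $Y_k\ge 0$ and $c_k-c_{k+1}\ge 0$) telescope to $\sum_k c_k(E[Y_k]-E[Y_{k-1}])$. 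This use of the defining inequality with a genuine indicator of the past-crossing event (or, alternatively, Rao's induction, which uses the indicator $1_{A_{n-1}^{c}}$ of the complement of the good event up to time $n-1$) is where the demisubmartingale structure really enters; monotonicity of $E[Y_k]$ plus the unweighted maximal inequality is not enough.
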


\noindent We also need this result to derive general SLLN's.
\begin{lemma} \label{lemma3} (See Theorem 2.4\ in T\'{o}m\'{a}cs -Libor \cite{thomas06}). Let
$\{\alpha _{k},\ k\in N\}$ be a sequence of non-negative real numbers, $r>0$, and $\{b_{k},\ k\in N\}$ a nondecreasing unbounded sequence of positive real numbers. Assume that $\sum_{k=1}^{\infty}\alpha _{k}\ b _{k}^{-r}<\infty $
and that there exists  $c>0$ such that for any  $n\in N$ and any  $\varepsilon >0$
\begin{equation*}
\mathbb{P}({\max_{k\leq n}}|S_{k}|\ \geq \varepsilon )\leq c\varepsilon ^{-r}%
{\sum_{k=1}^{n}}\alpha _{k}.
\end{equation*}
Then
$$
\lim_{n\rightarrow +\infty }\frac{S_{n}}{b_{n}}=0, \text{ } a.s
$$
\end{lemma}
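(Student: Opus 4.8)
The plan is to derive the almost sure convergence $S_n/b_n\to 0$ from the maximal probability inequality by a Borel--Cantelli argument carried out along a subsequence adapted to the growth of $(b_n)$. Since $(b_n)$ is nondecreasing and unbounded, $b_n\uparrow\infty$, so for all sufficiently large integers $i$ the set $\{n:b_n\le 2^i\}$ is nonempty and finite; I would set $n_i:=\max\{n:b_n\le 2^i\}$. Then $i\mapsto n_i$ is nondecreasing with $n_i\to\infty$, the blocks $(n_i,n_{i+1}]$ eventually exhaust $\mathbb{N}$, and for $n_i<k\le n_{i+1}$ one has $2^i<b_k\le 2^{i+1}$. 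Fixing $\varepsilon>0$ and putting $A_i:=\{\max_{n_i<k\le n_{i+1}}|S_k|\ge\varepsilon 2^i\}$, one checks that if only finitely many $A_i$ occur then, for all large $k$, choosing $i$ with $n_i<k\le n_{i+1}$ gives $|S_k|<\varepsilon 2^i<\varepsilon b_k$, hence $\limsup_k|S_k|/b_k\le\varepsilon$; intersecting the resulting full-measure events over $\varepsilon=1/\ell$, $\ell\to\infty$, yields the claim. So it suffices to prove $\sum_i\mathbb{P}(A_i)<\infty$.

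The second step is to estimate $\mathbb{P}(A_i)$. Enlarging the range of the maximum and applying the hypothesis with $n=n_{i+1}$ and threshold $\varepsilon 2^i$ gives $\mathbb{P}(A_i)\le c\,\varepsilon^{-r}2^{-ir}\sum_{k=1}^{n_{i+1}}\alpha_k$. Writing $\sum_{k=1}^{n_{i+1}}\alpha_k=D+\sum_{j\le i}c_j$ with $D$ the (finite) initial mass and $c_j:=\sum_{n_j<k\le n_{j+1}}\alpha_k$, summing over $i$ and interchanging the two nonnegative sums makes the inner series $\sum_{i\ge j}2^{-ir}$ collapse geometrically (this is where $r>0$ is used), leaving a bound of the form $\mathrm{const}\cdot\big(D+\sum_j c_j 2^{-jr}\big)$. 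Finally, since $b_k\le 2^{j+1}$ on the $j$-th block, there $2^{-jr}\le 2^r b_k^{-r}$, so $\sum_j c_j 2^{-jr}\le 2^r\sum_k\alpha_k b_k^{-r}<\infty$ by hypothesis. This gives $\sum_i\mathbb{P}(A_i)<\infty$ and closes the argument.

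The main obstacle is the mismatch between the \emph{cumulative} nature of the maximal inequality --- it controls $\max_{k\le n}|S_k|$ from the first index on, so a block $(n_i,n_{i+1}]$ can only be bounded by the full partial sum $\sum_{k\le n_{i+1}}\alpha_k$, which need not be bounded --- and the need for a summable bound; the dyadic blocking together with the order-of-summation interchange is precisely what dissolves this, since each partial sum gets the weight $2^{-ir}$ and the geometric decay reconstitutes the convergent series $\sum_k\alpha_k b_k^{-r}$. An essentially equivalent alternative is to first upgrade the hypothesis to a H\`{a}jek--R\'{e}nyi form $\mathbb{P}(\max_{m\le k\le n}|S_k|/b_k\ge\varepsilon)\le C\varepsilon^{-r}\big(b_m^{-r}\sum_{k\le m}\alpha_k+\sum_{k=m+1}^n\alpha_k b_k^{-r}\big)$ and then let $n\to\infty$ and $m\to\infty$; but establishing that intermediate inequality uses the same dyadic device, so I would favour the direct Borel--Cantelli route sketched above.
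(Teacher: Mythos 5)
Your proof is correct and complete. Note that the paper itself offers no argument for this lemma: it is quoted as Theorem 2.4 of T\'{o}m\'{a}cs--Libor \cite{thomas06}, so the only comparison available is with that cited source. Your direct route --- the dyadic blocking $n_i=\max\{n:\,b_n\le 2^i\}$, the inclusion $A_i\subseteq\{\max_{k\le n_{i+1}}|S_k|\ge \varepsilon 2^i\}$ followed by the hypothesis with threshold $\varepsilon 2^i$, the Tonelli interchange that converts the cumulative masses $\sum_{k\le n_{i+1}}\alpha_k$ weighted by $2^{-ir}$ back into $\sum_k\alpha_k b_k^{-r}$ through the geometric tail $\sum_{i\ge j}2^{-ir}$ (this is indeed exactly where $r>0$ enters), and Borel--Cantelli plus intersection over $\varepsilon=1/\ell$ --- checks out in every detail, including the small points one must verify (the sets $\{n:\,b_n\le 2^i\}$ are finite and eventually nonempty, empty blocks are harmless, $2^i<b_k\le 2^{i+1}$ on the $i$-th block, and the initial mass $D$ is a finite sum). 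The source the paper cites proceeds instead by the two-step scheme you mention as an alternative: it first upgrades the Kolmogorov-type hypothesis to a H\`{a}jek--R\'{e}nyi-type maximal inequality with the weights $b_k$, and then deduces the SLLN in the style of Fazekas--Klesov. Your argument short-circuits that intermediate inequality with no loss for the stated conclusion; the longer route buys the quantitative weighted maximal inequality itself, which is of independent interest and is in fact the kind of statement the present paper is devoted to.
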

\section{OUR RESULTS} \label{sec3}
Before we state our results, let us make this simple remark : there exist two demisubmartingales
\begin{equation}
\{u_{n}, \text{ } n \geq 1\} \text{ and } \{v_{n}, \text{ } n \geq 1\} \label{maj1}
\end{equation}

\noindent such that
\begin{equation}
\forall n\geq 1, \text{  } S_{n}\leq u_{n}+v_{n} \text{  } a.s. \label{maj2}
\end{equation}

\noindent To see that it suffices to set $u_{n}=\sum_{i=1}^{n}X_{i}^{+}$  and  $v_{n}=\sum_{i=1}^{n}X_{i}^{-}$. Then (\ref{maj2}) is evident. Finally, each of these sequence is a $\mathcal{F}_{n}$-submartingale and then a demisubmartingale.\\

\noindent We use the demisubmartingales $u_{n}$ and $u_{n}$ in our general H\`{a}jek - R\'{e}nyi's type of inequality below.

\begin{theorem} \label{theo} Let  $\{X_{n},n\geq 1\}$  be an arbitrary sequence of $a.s$ finite random variables. Let $\phi(.)$ be a nonnegative nondecreasing convex function such that $\phi (S_{o})=0$  and there exist a positive number real $K$ such that
for all  $(x,y) \in \mathbb{R}^{2}$,
\begin{equation*}
\phi (x+y)\leq K[\phi (x)+\phi (y)]
\end{equation*}

\noindent Let $\chi (b)$  be a positive non decreasing function of $b>0$ and $0=b_{0}<b_{1}\leq b_{2}\leq ... \leq b_{n}$ and finally set
$A_{n}=\left( \phi (S_{k})\leq \chi (b_{k}),1\leq k\leq n \right)$, $n \geq 1$. Then for any $n \geq 1$,
\begin{equation*}
\mathbb{P}(A_{n})\geq 1-2K\ \displaystyle{\sum_{k=1}^{n}}\ \displaystyle%
\frac{\mathbb{E}[\phi (u_{k})+\phi (v_{k})]-\mathbb{E}[\phi (u_{k-1})+\phi
(v_{k-1})]}{\chi (b_{k})}
\end{equation*}
whenever the right member is well-defined and where $\{u_{k},k\geq 1\}$  and  $\{v_{k},k\geq 1\}$ are two demisubmartingales defined in (\ref{maj1})
\end{theorem}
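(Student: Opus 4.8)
The plan is to reduce Theorem \ref{theo} to Rao's inequality (Lemma 3) applied separately to the two demisubmartingales $\{u_k\}$ and $\{v_k\}$, then glue the two estimates together using the quasi-convexity hypothesis $\phi(x+y)\le K[\phi(x)+\phi(y)]$. First I would observe that since $\phi$ is nondecreasing and $S_k\le u_k+v_k$ a.s. by \eqref{maj2}, we have $\phi(S_k)\le\phi(u_k+v_k)\le K[\phi(u_k)+\phi(v_k)]$ for every $k$. Hence the event $A_n=\bigl(\phi(S_k)\le\chi(b_k),\ 1\le k\le n\bigr)$ contains the event $B_n=\bigl(K[\phi(u_k)+\phi(v_k)]\le\chi(b_k),\ 1\le k\le n\bigr)$, so it suffices to lower-bound $\mathbb{P}(B_n)$.

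Next I would split the chi-threshold in half: $B_n$ contains $C_n\cap D_n$ where $C_n=\bigl(\phi(u_k)\le\tfrac{\chi(b_k)}{2K},\ 1\le k\le n\bigr)$ and $D_n=\bigl(\phi(v_k)\le\tfrac{\chi(b_k)}{2K},\ 1\le k\le n\bigr)$, because on $C_n\cap D_n$ one has $K[\phi(u_k)+\phi(v_k)]\le\chi(b_k)$. Now $\{u_k\}$ is a demisubmartingale with $\phi(S_0)=\phi(u_0)=0$ (taking $u_0=0$), $\phi$ is a nonnegative nondecreasing convex function, and $b\mapsto\tfrac{\chi(b)}{2K}$ is a positive nondecreasing function of $b>0$; so Lemma 3 applies verbatim and gives
\begin{equation*}
\mathbb{P}(C_n)\ge 1-\sum_{k=1}^{n}\frac{\mathbb{E}[\phi(u_k)]-\mathbb{E}[\phi(u_{k-1})]}{\chi(b_k)/(2K)}=1-2K\sum_{k=1}^{n}\frac{\mathbb{E}[\phi(u_k)]-\mathbb{E}[\phi(u_{k-1})]}{\chi(b_k)},
\end{equation*}
and symmetrically the same bound for $\mathbb{P}(D_n)$ with $v_k$ in place of $u_k$. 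Finally I would combine these via $\mathbb{P}(C_n\cap D_n)\ge 1-\mathbb{P}(C_n^c)-\mathbb{P}(D_n^c)$, which after adding the two sums yields exactly
\begin{equation*}
\mathbb{P}(A_n)\ge\mathbb{P}(C_n\cap D_n)\ge 1-2K\sum_{k=1}^{n}\frac{\mathbb{E}[\phi(u_k)+\phi(v_k)]-\mathbb{E}[\phi(u_{k-1})+\phi(v_{k-1})]}{\chi(b_k)},
\end{equation*}
which is the claimed bound.

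I would also check the measurability/integrability bookkeeping: the hypothesis that the right-hand side is well-defined is exactly what guarantees $\mathbb{E}[\phi(u_k)]<\infty$ and $\mathbb{E}[\phi(v_k)]<\infty$ for each $k$ (so that $\phi(u_k),\phi(v_k)\in L^1$ and Lemma 3 is legitimately invoked), and by Lemma 2(i) applied to the demimartingale-to-demisubmartingale passage one confirms $\{\phi(u_k)\}$ and $\{\phi(v_k)\}$ are themselves demisubmartingales, though strictly speaking Lemma 3 only needs $\{u_k\},\{v_k\}$ to be demisubmartingales, which was established in the remark before the theorem. The main — and really only — obstacle is a clean handling of the two elementary set inclusions $A_n\supseteq B_n\supseteq C_n\cap D_n$ and the factor-of-$2K$ threshold split; there is no analytic difficulty beyond this, since all the probabilistic content is already packaged in Lemma 3. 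One minor point to state carefully is that we take $u_0=v_0=0$ so that $\phi(u_0)=\phi(v_0)=\phi(S_0)=0$, matching the normalization required by Rao's lemma.
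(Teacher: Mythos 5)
Your proposal is correct and follows essentially the same route as the paper: bound $\phi(S_k)\le K[\phi(u_k)+\phi(v_k)]$ via \eqref{maj2}, split the threshold $\chi(b_k)$ in half between the two demisubmartingales $\{u_k\}$ and $\{v_k\}$, and apply Rao's inequality (Lemma 3) to each, combining the two bounds by a union-bound argument. Your phrasing through the inclusions $A_n\supseteq C_n\cap D_n$ is just the complementary formulation of the paper's estimate of $\mathbb{P}(A_n^c)$, and if anything it handles the strict-versus-nonstrict inequality bookkeeping a bit more cleanly.
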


\begin{proof} By (\ref{maj2}),
\begin{equation*}
S_{k}\leq u_{k}+v_{k},\ 1\leq k\leq n, \text{ } a.s
\end{equation*}

\noindent and then
\begin{equation*}
\phi (S_{k})  \leq \phi (u_{k}+v_{k}) \leq K[\phi (u_{k})+\phi (v_{k})]\text{ }a.s.
\end{equation*}

\noindent Next,
$$
\mathbb{P}(A_{n}^{c})  =\mathbb{P}\biggl(\max_{1\leq k<n} \frac{\phi(S_{k})}{\chi (b_{k})}\geq 1\biggr)
$$
$$
\leq \mathbb{P}\biggl(\max_{1\leq k<n} \frac{K\phi (u_{k})}{\chi (b_{k})}\geq 1/2\biggr)
+\mathbb{P}\biggl(\max_{1\leq k<n}\ \frac{K\phi (v_{k})}{\chi (b_{k})}\geq 1/2\biggr)
$$
$$
\leq 2K \sum_{k=1}^{n} \frac{\mathbb{E}[\phi(u_{k})+\phi (v_{k})]-\mathbb{E} \lbrack \phi (u_{k-1})+\phi (v_{k-1})]}{\chi (b_{k})},
$$

\noindent whenever the right member is well-defined.\\
\end{proof}

The above result is a generalization of the result of Rao \cite{rao1} established for demisubmartingales to a sequence of arbitrary random variables.\\

\noindent

\begin{theorem} \label{theo2}
Let  $\{X_{n},n\geq 1\}$  be an arbitrary $a.s.$ finite random variables and
\ $\phi (.)$  be a nonnegative nondecreasing convex function such that  $\phi(0)=0.$ Let $\chi (b)$ be a positive nondecreasing function of $b>0$ such that  $\chi (b)\rightarrow \infty$ as $b\rightarrow\infty$ and let $(b_{n})_{n\geq 1}$ a nondecreasing and unbounded sequence of positive real numbers. Further suppose the following series is well-defined and satisfies
\begin{equation*}
\sum_{k=1}^{\infty } \frac{\mathbb{E}[\phi
(u_{k})+\phi (v_{k})]-\mathbb{E}[\phi (u_{k-1})+\phi (v_{k-1})]}{\chi (b_{k})}%
<\infty ,
\end{equation*}
where  $u_{k}$ and $v_{k}$ are demisubmartingales defined in \ref{maj2}.\newline
Then
\begin{equation*}
\frac{\phi (S_{n})}{\chi (b_{n})} \rightarrow 0, \text{ a.s. as } n\rightarrow \infty.
\end{equation*}
\end{theorem}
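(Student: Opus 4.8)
The plan is to squeeze an \emph{unweighted} Kolmogorov-type maximal inequality for the sequence $\{\phi(S_k)\}$ out of Theorem \ref{theo} by specializing $\chi$ to a constant, and then to hand that inequality to the T\'om\'acs--Libor criterion (Lemma \ref{lemma3}) with the weights $\chi(b_k)$.

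First I would apply Theorem \ref{theo} with the same $b_k$'s but with the constant function $\chi^{*}(b)\equiv\varepsilon$ in place of $\chi$, which is legitimate since a positive constant is a positive nondecreasing function of $b>0$. Because $\phi(S_0)=0$, this gives, for every $n\ge 1$ and every $\varepsilon>0$,
\[
\mathbb{P}\bigl(\phi(S_k)\le\varepsilon,\ 1\le k\le n\bigr)\ \ge\ 1-\frac{2K}{\varepsilon}\sum_{k=1}^{n}\bigl(\mathbb{E}[\phi(u_k)+\phi(v_k)]-\mathbb{E}[\phi(u_{k-1})+\phi(v_{k-1})]\bigr)\ =\ 1-\frac{2K}{\varepsilon}\,\mathbb{E}\bigl[\phi(u_n)+\phi(v_n)\bigr],
\]
the last equality by telescoping together with $\phi(u_0)=\phi(v_0)=\phi(0)=0$. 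Put $\alpha_k:=\mathbb{E}[\phi(u_k)+\phi(v_k)]-\mathbb{E}[\phi(u_{k-1})+\phi(v_{k-1})]$. Since $\{u_k\}$ and $\{v_k\}$ are demisubmartingales (as noted in the remark preceding Theorem \ref{theo}) and $\phi$ is nonnegative, nondecreasing and convex, the lemma on convex functions and demimartingales recalled in Section \ref{sec2} shows that $\{\phi(u_k)\}$ and $\{\phi(v_k)\}$ are demisubmartingales, so $k\mapsto\mathbb{E}\phi(u_k)$ and $k\mapsto\mathbb{E}\phi(v_k)$ are nondecreasing and hence $\alpha_k\ge 0$. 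Taking complements above and passing from ``$>$'' to ``$\ge$'' (either by the inclusion $\{\max_{k\le n}\phi(S_k)\ge\varepsilon\}\subseteq\{\max_{k\le n}\phi(S_k)>\varepsilon/2\}$, at the cost of doubling the constant, or by replacing $\varepsilon$ with $\varepsilon-\delta$ and letting $\delta\downarrow 0$), I obtain
\[
\mathbb{P}\Bigl(\max_{1\le k\le n}\phi(S_k)\ge\varepsilon\Bigr)\ \le\ 2K\,\varepsilon^{-1}\sum_{k=1}^{n}\alpha_k,\qquad n\ge 1,\ \varepsilon>0 .
\]

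Next I would invoke Lemma \ref{lemma3} with $r=1$, $c=2K$, the non-negative sequence $\{\alpha_k\}$ just introduced, and the sequence $\{\chi(b_k)\}_{k\ge 1}$ playing the role of the weights $\{b_k\}$ there: it is nondecreasing (as $\chi$ is nondecreasing and $(b_k)$ is), positive, and unbounded (since $b_k\to\infty$ and $\chi(b)\to\infty$ as $b\to\infty$), while $\sum_{k\ge 1}\alpha_k/\chi(b_k)<\infty$ is precisely the standing hypothesis. Because $\phi(S_k)\ge 0$, the last display is exactly the hypothesis of Lemma \ref{lemma3} with $|S_k|$ read as $\phi(S_k)$, so its conclusion yields $\phi(S_n)/\chi(b_n)\to 0$ a.s., which is the assertion.

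I do not expect a genuine obstacle here: the whole idea is that substituting a constant for $\chi$ in Theorem \ref{theo} strips off the $\chi(b_k)$-weights and leaves a clean maximal inequality, after which Lemma \ref{lemma3} does the rest. The points that need care are purely formal: checking that a constant is an admissible choice of $\chi$ in Theorem \ref{theo}; verifying $\alpha_k\ge 0$ via the demisubmartingale property of $\{\phi(u_k)\}$ and $\{\phi(v_k)\}$; absorbing the harmless ``$\ge$'' versus ``$>$'' mismatch between the events in Theorem \ref{theo} and in Lemma \ref{lemma3}; and recording that, exactly as in Theorem \ref{theo}, $\phi$ is here also taken to satisfy $\phi(x+y)\le K[\phi(x)+\phi(y)]$, which is what makes Theorem \ref{theo} applicable.
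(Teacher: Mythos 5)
Your proposal is correct and supplies precisely the details the paper omits behind ``follows immediately by Lemma \ref{lemma3}'': specializing Theorem \ref{theo} to a constant $\chi\equiv\varepsilon$ to get the unweighted maximal inequality $\mathbb{P}(\max_{k\le n}\phi(S_k)\ge\varepsilon)\le c\,\varepsilon^{-1}\sum_{k\le n}\alpha_k$ with $\alpha_k$ the telescoping increments, and then applying Lemma \ref{lemma3} with $r=1$ and weights $\chi(b_k)$, which is exactly the paper's intended route. You are also right to flag that the theorem's statement tacitly needs the quasi-subadditivity $\phi(x+y)\le K[\phi(x)+\phi(y)]$ for Theorem \ref{theo} to apply (though one could alternatively bypass it by noting $S_n\le u_n$ pointwise and $\phi$ nondecreasing); the rest of your checks (nonnegativity of $\alpha_k$, monotonicity and unboundedness of $\chi(b_k)$, the ``$>$'' versus ``$\ge$'' adjustment) are all sound.
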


\begin{proof}
The result follows immediately by Lemma \ref{lemma3}. We omit the details.
\end{proof}

\section{Applications and comparison} \label{sec4}

\subsection{Comparison with former results}
Our result is to be compared with the following results of Amini et al. \cite{amini06} for arbitrary random variables.

\begin{theorem}
\label{theo1} \label{theo1} (See Amini and Bozorgnia, A .\cite{amini06}) Let
$\{X_{n},n\geq 1\}$ be a sequence of random variables with $\mathbb{E}(X_{n})=0$, $%
\sigma _{n}^{2}=Var(X_{n})=\mathbb{E}(X_{n}^{2})<\infty ,\ n\geq 1$ , and $\{b_{n},\
n\geq 1\}$ be a sequence of positive nondecreasing real numbers, then for
every $\varepsilon >0$
\begin{equation*}
\mathbb{P}({\max_{1\leq k\leq n}}\frac{|S_{k}|}{b_{k}}\geq \varepsilon )\leq
\frac{8}{\varepsilon ^{2}}{\sum_{k=1}^{n}}\frac{\sigma _{k}^{2}}{b_{k}^{2}}+2%
{\sum_{k=2}^{n}}\frac{\sigma _{k}{\sum_{i=1}^{k-1}\sigma _{i}}}{b_{k}^{2}}.
\end{equation*}
\end{theorem}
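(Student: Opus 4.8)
The plan is to treat this as a H\`{a}jek--R\'{e}nyi inequality for a (possibly dependent) sequence of mean-zero, square-integrable variables; since no martingale structure on $S_n$ itself is assumed, optional-stopping arguments are unavailable, and I would instead combine the classical level-crossing (first-exit) decomposition with the Cauchy--Schwarz inequality to absorb the covariance contributions, and with a dyadic blocking of the weights $b_k$ to convert a sum of the form $\sum_k b_k^{-2}(\cdots)$ into a convergent geometric series.

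First I would introduce the disjoint events $A_k=\{\,|S_j|<\varepsilon b_j\ \text{for}\ j<k,\ |S_k|\ge\varepsilon b_k\,\}$ for $1\le k\le n$, whose union is $\{\max_{1\le k\le n}|S_k|/b_k\ge\varepsilon\}$, so that $\varepsilon^{2}\sum_{k=1}^{n}\mathbb{P}(A_k)\le\sum_{k=1}^{n}b_k^{-2}\,\mathbb{E}[S_k^{2}\mathbf{1}_{A_k}]$. Expanding $S_k^{2}=S_{k-1}^{2}+2S_{k-1}X_k+X_k^{2}$ on $A_k$, the term in $X_k^{2}$ is at once bounded by $\sum_k\sigma_k^{2}/b_k^{2}$, and for the mixed term one drops the indicator and applies Cauchy--Schwarz together with Minkowski's inequality, $|\mathbb{E}[S_{k-1}X_k\mathbf{1}_{A_k}]|\le\|S_{k-1}\|_2\|X_k\|_2\le\sigma_k\bigl(\sum_{i<k}\sigma_i\bigr)$, which reproduces exactly the second summand $2\sum_{k\ge2}\sigma_k\bigl(\sum_{i<k}\sigma_i\bigr)/b_k^{2}$ of the claimed bound, and, crucially, with no factor $\varepsilon^{-2}$ attached.

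The delicate point is the surviving term $\sum_k b_k^{-2}\mathbb{E}[S_{k-1}^{2}\mathbf{1}_{A_k}]$: the obvious estimate $S_{k-1}^{2}<\varepsilon^{2}b_{k-1}^{2}\le\varepsilon^{2}b_k^{2}$ on $A_k$ only reconstructs $\varepsilon^{2}\sum_k\mathbb{P}(A_k)$ and therefore cancels the left-hand side, leaving no information. To get past this I would coarsen the index set into blocks on which $b_k$ roughly doubles (a dyadic partition according to the size of $b_k$), so that across consecutive blocks the ratios $b_{k-1}^{2}/b_k^{2}$ are bounded away from $1$; applying the preceding estimate block by block and summing the resulting geometric series in the weights then produces the constant $8$ in front of $\sum_k\sigma_k^{2}/b_k^{2}$. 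Equivalently, one may first establish a Kolmogorov-type maximal inequality for arbitrary mean-zero $L^{2}$ variables on each initial segment and then reassemble over dyadic blocks in the standard H\`{a}jek--R\'{e}nyi manner.

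I expect the main obstacle to be exactly this last step: the bare level-crossing estimate is self-defeating for general weight sequences, so the real content of the statement lies in carrying out the dyadic blocking and the bookkeeping of constants carefully enough that the covariance correction is kept in the sharp, $\varepsilon$-free form obtained above rather than being inflated by a spurious $\varepsilon^{-2}$. As an alternative I would note the submartingale route used later in this paper -- writing $S_n=u_n-v_n$ with $u_n=\sum X_i^{+}$, $v_n=\sum X_i^{-}$ nonnegative submartingales, so $|S_n|\le u_n+v_n$, and applying a Rao-type or Doob-type maximal inequality to $u_n$ and $v_n$ -- though this yields the covariance term only up to a factor $\varepsilon^{-2}$ and so falls slightly short of the precise constants claimed here.
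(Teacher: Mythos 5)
First, note that the paper does not actually prove this statement: it is quoted from Amini and Bozorgnia \cite{amini06}, and the authors merely remark that it can be recovered from Theorem \ref{theo} by taking $\phi(x)=|x|^{2}$, $u_{n}=\sum_{i\leq n}X_{i}^{+}$, $v_{n}=\sum_{i\leq n}X_{i}^{-}$ --- which is exactly the ``alternative'' submartingale route you mention at the end. Your main route, however, has two genuine gaps. (i) The assertion that the mixed term $2\sum_{k}b_{k}^{-2}\mathbb{E}[S_{k-1}X_{k}\mathbf{1}_{A_{k}}]$ survives ``with no factor $\varepsilon^{-2}$ attached'' contradicts your own starting point $\varepsilon^{2}\sum_{k}\mathbb{P}(A_{k})\leq\sum_{k}b_{k}^{-2}\mathbb{E}[S_{k}^{2}\mathbf{1}_{A_{k}}]$: once you divide by $\varepsilon^{2}$ to isolate $\mathbb{P}(\bigcup_{k}A_{k})$, every term on the right, including the covariance term, acquires $\varepsilon^{-2}$. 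The $\varepsilon$-free second summand of the target bound is therefore asserted, not derived. (ii) The term $\sum_{k}b_{k}^{-2}\mathbb{E}[S_{k-1}^{2}\mathbf{1}_{A_{k}}]$, which you correctly identify as the crux, is never actually disposed of: the ``dyadic blocking'' is invoked but not carried out, and the block-wise ingredient you would need --- a Kolmogorov-type maximal inequality on each block --- is not available here, since the $X_{n}$ are arbitrary (no independence, no martingale property, not even association is assumed). In this generality the only usable maximal inequalities are those coming from the monotone, hence submartingale, sequences $u_{k}$ and $v_{k}$, i.e.\ precisely the route you set aside.

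As you yourself observe, that submartingale route (splitting $\{\max_{k}|S_{k}|/b_{k}\geq\varepsilon\}$ into $\{\max_{k}u_{k}/b_{k}\geq\varepsilon/2\}\cup\{\max_{k}v_{k}/b_{k}\geq\varepsilon/2\}$ and applying a Chow/Rao-type inequality to $u_{k}^{2}$ and $v_{k}^{2}$) produces a bound of the form $\frac{C_{1}}{\varepsilon^{2}}\sum_{k}\frac{\sigma_{k}^{2}}{b_{k}^{2}}+\frac{C_{2}}{\varepsilon^{2}}\sum_{k}\frac{\sigma_{k}\sum_{i<k}\sigma_{i}}{b_{k}^{2}}$, with \emph{both} terms carrying $\varepsilon^{-2}$; this is what the paper's Theorem \ref{theo} specializes to, and it differs from the displayed statement both in the constants and in the absence of $\varepsilon^{-2}$ on the covariance term (as printed, that term is not even invariant under the rescaling $X_{i}\mapsto cX_{i}$, $\varepsilon\mapsto c\varepsilon$, which strongly suggests it should be read with an $\varepsilon^{-2}$ as well). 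So neither of your two routes, as written, yields the inequality with the stated constants: the first stalls at the $S_{k-1}^{2}$ term and silently drops an $\varepsilon^{-2}$, while the second proves a genuinely different (and, for small $\varepsilon$, weaker) bound. To close the gap you should either carry out the submartingale argument in full and accept the constants it actually gives, or reproduce the precise computation of \cite{amini06} that is claimed to yield the $\varepsilon$-free covariance term.
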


\noindent This result is derived from Theorem \ref{theo} by putting $\phi(x)=|x|^{2}$ and $\chi (b)=\varepsilon b$ with $E(X_{n})=0$, $\mathbb{E}(X_{n}^{2})<\infty$ for all $n\geq 1$, $u_{n}=\sum_{i=1}^{n}X_{i}^{+}$  and $v_{n}=\sum_{i=1}^{n}X_{i}^{-}$.\\

\noindent More generally, our results apply for $\phi(x)=|x|^{\nu}$ for $\nu \geq 1$. This means that we are able to have SLLN's when $\nu=1$ that is when the $X_{n}$'s only have finite first moments.\\

\noindent For example, for a sequence of random variables with stable laws of parameters $1\leq \alpha_{n} <2$, the second moments are infinite for those random variables while their first moments exists. The results of Amini et al. do not apply.\\

\subsection{Comparison with other inequalities or SLLN's}
We rediscover Theorem 3.12 of T\'{o}m\'{a}cs \textit{et al.} \cite{thomas06} and Theorem 2.2 of Christophides
\cite{christophe00} for demimartingale sequence $\{S_{n},n\in \mathbb{N}\}$ by taking $\phi
(x)=(x)^{r}$ for all $x$ nonnegative real and $0$ otherwise, $u_{n}=(S_{n}^{+})^{r}$ \ et \ $%
v_{n}=(S_{n}^{-})^{r}$ \ for all $r\geq 1$ which are two demisubmaringales
and remarking that $|S_{n}|^{r} =(S_{n}^{+})^{r}+ (S_{n}^{-})^{r}$.\\

\noindent Our main result also extends Theorem 2.1 and related results of Wang \textit{et
al.} \cite{wang10} in the case where the function $g$ defined in this above paper
is in addition nondecreasing and there exist a positive number real $K$ such that
for all  $(x,y) \in \mathbb{R}^{2}$,
\begin{equation*}
 g(x+y)\leq K[g(x)+g(y)]
\end{equation*} for demimartingales random sequences to
arbitrary integrale random variables.It is the case for $g(x)=(x)^{\nu}$ for all $x$ nonnegative real and $0$ otherwise, for $\nu \geq 1$ \\

\noindent Let us mention that the inequality of Laha and Rohtgi \cite{laha} proved for submartingales could be extended to a version of our Theorem \ref{theo} for $\phi(x)=|x|^{\nu}$, for $\nu \geq 1$.

\end{document}